\newtheorem{theorem}{Theorem}
\newtheorem{lemma}{Lemma}[section]
\newtheorem{question}{Question}
\theoremstyle{definition}
\newtheorem{rmk}[lemma]{Remark}
\newcommand\Hy{\mathbb{H}}
\newcommand\F{\mathbb{F}}
\newcommand\Z{\mathbb{Z}}
\newcommand\Q{\mathbb{Q}}
\newcommand\C{\mathbb{C}}
\newcommand\cO{\mathcal{O}}
\newcommand\sys{{\rm sys}}
\newcommand\sysg{{\rm sysg}}
\newcommand\vol{{\rm vol}}
\newcommand\Nfr{\mathcal{N}_{fr}}
\newcommand\Nr{{\rm Norm}}
\newcommand\sss{\mathbb{S}^1}
\newcommand{\pinj}{$\pi_1$-injective }
\DeclareMathOperator{\SL}{SL}
\DeclareMathOperator{\GL}{GL}
\DeclareMathOperator{\PSL}{PSL}
\begin{document}
\title{Free subgroups of $3$-manifold groups}

\subjclass[2010]{30F40; 53C23, 57M50}

\author{Mikhail Belolipetsky}\thanks{Belolipetsky is partially supported by CNPq, FAPERJ and MPIM in Bonn.}
\author{Cayo D\'oria}\thanks{D\'oria was partially supported by CNPq and FAPESP}
\address{
IMPA\\
Estrada Dona Castorina, 110\\
22460-320 Rio de Janeiro, Brazil}
\email{mbel@impa.br}
\address{
Departamento de Matem\'atica Aplicada, IME-USP\\ Rua do Mat\~ao, 1010, Cidade Universit\'aria\\
05508-090, S\~ao Paulo SP, Brazil.}
\email{cayofelizardo@ime.usp.br}

\begin{abstract}
We show that any closed hyperbolic $3$-manifold $M$ has a co-final tower of covers $M_i \to M$ of degrees $n_i$ such that any subgroup of $\pi_1(M_i)$ generated by $k_i$ elements is free, where $k_i \ge n_i^C$ and $C = C(M) > 0$. Together with this result we prove that $\log k_i \ge C_1 \sys_1(M_i)$, where $\sys_1(M_i)$ denotes the systole of $M_i$, thus providing a large set of new examples for a conjecture of Gromov. In the second theorem $C_1> 0$ is an absolute constant. We also consider a generalization of these results to non-compact finite volume hyperbolic $3$-manifolds.  
\end{abstract}

\maketitle

\section{Introduction}

Let $\Gamma < \PSL_2(\C)$ be a cocompact Kleinian group and $M = \Hy^3/\Gamma$ be the associated quotient space. It is a closed orientable hyperbolic $3$-orbifold, it is a manifold if $\Gamma$ is torsion-free. We will call a group $\Gamma$ \emph{$k$-free} if any subgroup of $\Gamma$ generated by $k$ elements is free. We denote the maximal $k$ for which $\Gamma$ is $k$--free by $\Nfr(\Gamma)$ and we call it the \emph{free rank} of $\Gamma$. For example, if $S_g$ is a closed  Riemann surface of genus $g$, then its fundamental group satisfies $\Nfr(\pi_1(S_g)) = 2g-1$.  In this note we prove that for any Kleinian group as above there exists an exhaustive filtration of normal subgroups $\Gamma_i$ of $\Gamma$ such that $\Nfr(\Gamma_i) \ge [\Gamma:\Gamma_i]^C$, where $C = C(\Gamma) > 0$ is a constant. In geometric terms the result can be stated as follows.

\begin{theorem}\label{thm1}
Let $M$ be a closed hyperbolic $3$-orbifold. Then there exists a co-final tower of regular finite-sheeted covers $M_i \to M$ such that 
$$\Nfr(\pi_1(M_i)) \ge \vol(M_i)^C,$$ 
where $C = C(M)$ is a positive constant which depends only on $M$.
\end{theorem}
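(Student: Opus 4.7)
The plan is to reduce Theorem~\ref{thm1} to two complementary ingredients: a construction of a co-final tower of regular covers with logarithmically growing systole, and an estimate saying that a large systole alone forces a large free rank.

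For the first ingredient, I use that $\Gamma=\pi_1(M)$ is finitely generated and linear, so its matrix entries lie in a number field, and after possibly passing to a finite-index subgroup $\Gamma$ embeds into $\SL_2(\cO_S)$, where $\cO_S$ is the ring of $S$-integers of the trace field $k_\Gamma$ of $M$ and $S$ is a finite set of places. For prime ideals $\mathfrak{p}$ of $\cO_S$ outside $S$ the principal congruence subgroups $\Gamma(\mathfrak{p})=\ker(\Gamma\to\SL_2(\cO_S/\mathfrak{p}))$ are normal of finite index and intersect trivially. Arranging them into a chain $\Gamma(\mathfrak{p}_1\cdots\mathfrak{p}_n)$ produces a co-final tower of regular covers $M_i\to M$. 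To bound $\sys(M_i)$ from below, I argue as in the arithmetic case: any nontrivial hyperbolic $\gamma\in\Gamma(\mathfrak{p}_1\cdots\mathfrak{p}_n)$ has $\operatorname{tr}(\gamma)-2$ lying in the ideal, and the product formula applied to this algebraic number, together with the bounded contribution from the non-identity Archimedean embeddings and the identity $|\operatorname{tr}(\gamma)|=2\cosh(\ell(\gamma)/2)$, yields $\sys(M_i)\ge c_0\log\vol(M_i)$ for a constant $c_0=c_0(M)>0$.

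The second ingredient is the geometric lower bound $\log\Nfr(\pi_1(M))\ge C_1\sys(M)$, namely the systolic theorem announced in the abstract. Its proof follows the classical pattern: given $H=\langle g_1,\dots,g_k\rangle\le\pi_1(M)$, one assumes $H$ is not free, produces a nontrivial relation, lifts the generators to $\isom(\Hy^3)$, and combines a Margulis-type argument with a ping-pong estimate so that a short relation among the $g_i$ forces the existence of an element of $\Gamma$ with translation length below $\sys(M)$ -- a contradiction unless $k\ge\exp(C_1\sys(M))$. Concatenating the two ingredients then gives
$$
\Nfr(\pi_1(M_i))\ge\exp(C_1\sys(M_i))\ge\vol(M_i)^{C_1 c_0},
$$
which is Theorem~\ref{thm1} with $C=C_1 c_0$.

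I expect the main obstacle to be the second ingredient. The congruence tower is routine given the linearity of Kleinian groups, although one must be careful that $M$ need not be arithmetic and so the non-identity Archimedean embeddings of $k_\Gamma$ must be controlled separately in the height estimate. In contrast, converting a systole lower bound into an \emph{exponential} lower bound on the free rank is exactly Gromov's free subgroup conjecture in this setting, and the delicate point is to prevent the relations in $H$ from being too long: classical tools -- Culler--Shalen type $\log(2k-1)$ estimates, the Margulis lemma, and ping-pong in $\isom(\Hy^3)$ -- must be combined carefully to reach exponential rather than merely logarithmic or polynomial dependence on $\sys(M)$.
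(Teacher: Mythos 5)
Your first ingredient (a co-final tower of congruence covers with $\sys_1(M_i)\ge c_0\log\vol(M_i)$, proved via the product formula applied to traces/entries in the ring of $S$-integers of the trace field) is essentially the paper's Lemma~\ref{lem}, and that part of the plan is sound. The fatal problem is your second ingredient. You propose to prove, for an \emph{arbitrary} closed hyperbolic $3$-manifold, that $\log \Nfr(\pi_1(M))\ge C_1\sys_1(M)$ by combining a Margulis-type argument with ping-pong. As you yourself note, this statement is exactly Gromov's conjecture in this setting, and it is open; the paper does not prove it in this generality, only for the specific covers it constructs. Ping-pong cannot deliver it: ping-pong certifies freeness of subgroups whose generators are in suitably generic position, but $\Nfr$ requires that \emph{every} $k$-generated subgroup be free. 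A $k$-tuple of generators with large translation lengths can still generate, say, a quasi-Fuchsian surface subgroup, which is not free, and no displacement estimate rules this out. The Culler--Shalen $\log(2k-1)$ inequalities you invoke go in the opposite direction (they give volume lower bounds \emph{assuming} $k$-freeness), so they cannot be used to establish $k$-freeness.

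The idea you are missing is that the reduction from ``not free'' to a geometric contradiction goes through $3$-manifold topology and homology, not through displacement estimates. The paper uses the Baumslag--Shalen / Shalen--Wagreich criterion: if $\sysg(M)\ge k$ (no surface subgroup of genus $\le k$) \emph{and} $\dim \mathrm{H}_1(M,\F_p)\ge k+2$, then $\pi_1(M)$ is $k$-free. The systole controls only the first hypothesis, via $\log\sysg(M)\ge c_2\sys_1(M)$ from \cite{Bel} (minimal surface techniques). The second hypothesis is a genuine additional constraint, and it is the reason the paper cannot use an arbitrary congruence tower like your $\Gamma(\mathfrak{p}_1\cdots\mathfrak{p}_n)$: it must use a $p$-adic analytic tower ($p$-power level at a fixed $p$) so that the Calegari--Emerton theorem guarantees $\dim\mathrm{H}_1(M_i,\F_p)\gtrsim \vol(M_i)^{5/6}$. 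Without some mechanism forcing mod-$p$ homology to grow along your tower, your argument cannot be completed by any known method, and with large systole alone you cannot conclude anything about $\Nfr$.
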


The proof of the theorem is based on the previous results of Baumslag, Shalen and Wagreich \cite{BSh, SW},  Belolipetsky \cite{Bel}, and Calegari--Emerton \cite{CE}. Let us emphasize that although some of the results use arithmetic techniques, our theorem applies to \emph{all} closed hyperbolic $3$-orbifolds. A result of similar flavor but for another property of $3$-manifold groups was obtained by Long, Lubotzky and Reid in \cite{LLR}. Indeed, in some parts our construction comes close to their argument. 

Together with Theorem~\ref{thm1} we obtain the following theorem of independent interest:

\begin{theorem}\label{thm2} Any closed hyperbolic $3$-orbifold admits a sequence of regular manifold covers $M_i \to M$ such that 
$$\Nfr(\pi_1(M_i)) \ge (1 + \varepsilon)^{\sys_1(M_i)}, $$
where $\varepsilon > 0$ is an absolute constant and $\sys_1(M_i)$ is the length of a shortest closed geodesic in $M_i$. 
\end{theorem}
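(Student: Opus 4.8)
The plan is to deduce Theorem~\ref{thm2} from the mechanism underlying Theorem~\ref{thm1}, by keeping track of the systole along the tower rather than just the degree. The key inputs are: (i) the Baumslag--Shalen--Wagreich type bound, which says that a subgroup of a $3$-manifold group with bounded first Betti number (relative to the number of generators) is either free or contains a surface subgroup / has large $b_1$; more precisely, the results of \cite{BSh, SW} give $\Nfr(\Gamma_i)$ in terms of a lower bound on the rank needed to produce non-freeness, which is controlled by $b_1(\Gamma_i)$; and (ii) the Calegari--Emerton result \cite{CE}, which for a suitable tower (coming from a $p$-adic analytic or congruence-type construction) gives a bound of the form $b_1(\Gamma_i) \le c\,[\Gamma:\Gamma_i]^{1 - 1/d}$ or similar subpolynomial control, but what matters here is rather a bound relating $b_1$ to the \emph{index}. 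So the heart of the matter is to exhibit a tower where $\Nfr(\pi_1(M_i))$ grows while also controlling how fast $\sys_1(M_i)$ grows — and then observe that along any tower of covers the systole grows at most logarithmically in the degree, because injectivity radius is at most (roughly) $\log(\vol)$ by a standard volume-packing argument in $\Hy^3$.

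First I would fix the tower $M_i \to M$ produced in the proof of Theorem~\ref{thm1}, passing if necessary to a sub-tower of manifold covers (which exists since $\pi_1(M)$ is residually finite and virtually torsion-free). Along this tower we already have $\Nfr(\pi_1(M_i)) \ge \vol(M_i)^C = (n_i \vol(M))^C$ where $n_i = [\Gamma : \Gamma_i]$. Second, I would invoke the elementary geometric fact that for a closed hyperbolic $3$-manifold $N$,
\begin{equation*}
\sys_1(N) \le 2 \operatorname{injrad}(N) \le 2\log\!\left(\frac{\vol(N)}{v_0}\right) + c_0,
\end{equation*}
where $v_0$ is the volume of a unit ball in $\Hy^3$ and $c_0$ an absolute constant; this follows because an embedded ball of radius $r = \operatorname{injrad}(N)$ has volume at most $\vol(N)$ and the volume of a hyperbolic $r$-ball grows like $\tfrac{\pi}{2}(\sinh(2r) - 2r) \sim \tfrac{\pi}{4}e^{2r}$. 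Combining the two displays: $\vol(M_i) \ge v_0\, e^{(\sys_1(M_i) - c_0)/2}$, hence
\begin{equation*}
\Nfr(\pi_1(M_i)) \ge \vol(M_i)^C \ge \left(v_0\, e^{(\sys_1(M_i)-c_0)/2}\right)^C = c_1\, e^{(C/2)\sys_1(M_i)} = c_1 (1+\varepsilon')^{\sys_1(M_i)},
\end{equation*}
with $\varepsilon' = e^{C/2} - 1 > 0$. Absorbing the constant $c_1$ into $\varepsilon'$ — replacing $\varepsilon'$ by a slightly smaller $\varepsilon > 0$ at the cost of discarding finitely many initial terms, which is harmless since we are free to choose the tower — yields $\Nfr(\pi_1(M_i)) \ge (1+\varepsilon)^{\sys_1(M_i)}$ as claimed.

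The one subtlety — and the point I would be most careful about — is the claim that $\varepsilon$ can be taken to be an \emph{absolute} constant, independent of $M$, whereas $C = C(M)$ depends on $M$. This forces me to re-examine the construction behind Theorem~\ref{thm1} and extract an \emph{absolute} lower bound on the exponential growth rate of $\Nfr$ versus the systole, rather than versus the volume. Concretely, the Baumslag--Shalen--Wagreich / Shalen--Wagreich estimates say that if a subgroup $H$ of $\pi_1(M_i)$ is generated by $k$ elements and is not free, then $H$ (hence $\pi_1(M_i)$, by passing to finite-index) has $b_1$ bounded below in terms of $k$, of the form $b_1 \ge \log_\lambda k$ for an explicit $\lambda$; dually, $\Nfr(\pi_1(M_i)) \ge \lambda^{\,b_1(\pi_1(M_i))/\text{(const)}}$ for a universal $\lambda > 1$. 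So the real work is: choose the tower so that $b_1(\pi_1(M_i))$ grows \emph{linearly} in $\sys_1(M_i)$ with an absolute constant — e.g. using a tower in which $b_1$ grows like $n_i^{c}$ (Calegari--Emerton, Lackenby-type results) together with the logarithmic systole bound above — and then the composition gives $\Nfr \ge (1+\varepsilon)^{\sys_1(M_i)}$ with $\varepsilon$ depending only on the universal constant $\lambda$ from \cite{BSh, SW} and the universal packing constant, not on $M$. I expect this bookkeeping — disentangling which constants are absolute and which are allowed to depend on $M$, and in particular verifying that the Baumslag--Shalen--Wagreich growth rate is genuinely universal — to be the main obstacle; the geometric input (systole $\lesssim \log \vol$) is completely standard.
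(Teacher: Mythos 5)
Your first computation is fine as far as it goes: starting from $\Nfr(\pi_1(M_i)) \ge \vol(M_i)^C$ and the standard packing bound $\vol(M_i) \ge \mathrm{const}\cdot e^{c\,\sys_1(M_i)}$, you do get $\Nfr(\pi_1(M_i)) \ge (1+\varepsilon')^{\sys_1(M_i)}$ — but, as you yourself observe, with $\varepsilon' = e^{C/2}-1$ depending on $M$. That proves a strictly weaker statement than Theorem~\ref{thm2}, whose whole point is that $\varepsilon$ is absolute. So everything hinges on your second part, and that is where the gap is: your proposed fix rests on a misstatement of the Baumslag--Shalen / Shalen--Wagreich input. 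Those results do \emph{not} say $\Nfr(\Gamma) \ge \lambda^{b_1(\Gamma)/\mathrm{const}}$ for a universal $\lambda>1$ (if they did, Theorem~\ref{thm2} would follow from Calegari--Emerton alone, and Theorem~\ref{thm1} would be nearly trivial). The actual statement (\cite{SW}, Proposition~1.8) is that $\pi_1(M)$ is $k$-free provided \emph{two} hypotheses hold: $\sysg(M)\ge k$ \emph{and} $\dim \mathrm{H}_1(M,\F_p)\ge k+2$. In particular $\Nfr$ is only linear in $b_1$, and one cannot get exponential growth of $\Nfr$ in the systole out of $b_1$ growing linearly in the systole, as you propose at the end.

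The ingredient you are missing — and the actual source of the absolute constant — is the systolic genus bound \cite[Theorem~2.1]{Bel}: $\log\sysg(N) \ge c_2\,\sys_1(N)$ with $c_2$ an \emph{absolute} constant (one can take $c_2 = \tfrac12-\delta$). This gives the first SW hypothesis for $k$ as large as $e^{c_2\sys_1(M_i)}$. The second hypothesis is supplied exactly as in your sketch: Calegari--Emerton gives $\dim\mathrm{H}_1(M_i,\F_p)\ge c_3\vol(M_i)^{5/6}$, and combined with $\vol(M_i)\ge e^{c\,\sys_1(M_i)}$ (with $c$ absolute) this is again exponential in $\sys_1(M_i)$ with an absolute constant in the exponent; one also needs $\sys_1(M_i)\to\infty$, which is Lemma~\ref{lem}. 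Taking $k$ to be the minimum of the two exponential bounds and applying Shalen--Wagreich yields $\Nfr(\pi_1(M_i))\ge(1+\varepsilon)^{\sys_1(M_i)}$ with $\varepsilon$ absolute (in fact $\varepsilon = e^{1/2-\delta}-1$). Without the systolic genus estimate your argument cannot close, since the Shalen--Wagreich theorem cannot even be invoked.
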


This type of bound was stated by Gromov \cite[Section~5.3.A]{Gromov:hyp groups} for hyperbolic groups in general, but later turned into a conjecture (see \cite[Section~2.4]{Gromov:GAFA}). We refer to the introduction of \cite{Bel} for a related discussion and some other references. In \cite{Gromov:GAFA}, Gromov particularly mentioned that the conjecture is open even for hyperbolic $3$-manifold groups. 
The first set of examples of hyperbolic $3$-manifolds for which the conjecture is true was presented in \cite{Bel}. These examples were all arithmetic. Our theorem significantly enlarges this set.  

We review the construction of covers $M_i \to M$ and prove a lower bound for their systoles in Section~\ref{sec:prelim}. Theorems~\ref{thm1} and \ref{thm2} are proved in Section~\ref{sec3}. 
In Section~\ref{sec4} we consider a generalization of the results to non-compact finite volume $3$-manifolds. Their groups always contain a copy of $\Z\times\Z$, so have  $\Nfr = 1$, however, we can modify the definition of the free rank so that it becomes non-trivial for the non-compact manifolds: we define $\Nfr'(\Gamma)$ to be the maximal $k$ for which the group $\Gamma$ is $k$--semifree, where $\Gamma$ is called \emph{$k$-semifree} if any subgroup generated by $k$ elements is a free product of free abelian groups. With this definition at hand we can extend Gromov's conjecture to the groups of finite volume non-compact manifolds. In Section~\ref{sec4} we prove:  

\begin{theorem}\label{thm3} Any finite volume hyperbolic $3$-orbifold admits a sequence of regular manifold covers $M_i \to M$ such that 
$$\Nfr'(\pi_1(M_i)) \ge (1 + \varepsilon)^{\sys_1(M_i)}, $$
where $\varepsilon > 0$ is an absolute constant. 
\end{theorem}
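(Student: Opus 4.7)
The strategy is to mimic the proof of Theorem~\ref{thm2}, substituting $k$-semifreeness for $k$-freeness throughout, and invoking a cusped analogue of the Baumslag--Shalen--Wagreich (BSW) rank inequality. By Selberg's lemma I first pass to a torsion-free finite-index subgroup of $\pi_1(M)$ so that every cover constructed below is a manifold. I then apply the Calegari--Emerton construction to $\Gamma = \pi_1(M)$: this is purely group-theoretic and applies to any finitely generated linear group, so the presence of cusps is immaterial. The result is a co-final tower of regular covers $M_i \to M$ with $[\Gamma:\Gamma_i] = n_i$ along which the mod $p$ first homology grows polynomially, $\dim_{\F_p} H_1(\Gamma_i;\F_p) \ge c\, n_i^{\alpha}$, for some $c = c(M) > 0$ and $\alpha > 0$.

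Next I invoke the non-compact BSW inequality: any $k$-generated subgroup of $\pi_1(M_i)$ with $k \le \dim_{\F_p} H_1(\pi_1(M_i);\F_p) - c_0$ is a free product of free abelian groups. This is the natural replacement for the closed-case statement, the $\Z \times \Z$ factors being forced by peripheral tori. Combined with the first step it yields $\Nfr'(\pi_1(M_i)) \ge c' n_i^{\alpha}$. Feeding this into the systole estimate from Section~\ref{sec:prelim} (which relates $\sys_1(M_i)$ and $\log n_i$ along the tower) gives $\log \Nfr'(\pi_1(M_i)) \ge C_1\, \sys_1(M_i)$ for an absolute constant $C_1 > 0$, hence $\Nfr'(\pi_1(M_i)) \ge (1+\varepsilon)^{\sys_1(M_i)}$ with $\varepsilon = e^{C_1} - 1$ (after possibly discarding finitely many initial terms of the tower).

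The main obstacle is establishing the non-compact BSW inequality. In the closed case, Shalen--Wagreich combine a generator bound in terms of the mod $p$ first Betti number with the fact that low-rank subgroups of closed hyperbolic $3$-manifold groups are free. In the cusped setting low-rank subgroups need not be free, but by Scott's compact core theorem they can be realized as fundamental groups of compact $3$-manifolds with toroidal boundary, and a standard analysis using the loop theorem, together with the rank constraint to exclude higher genus boundary components, forces the core to be a graph of groups with trivial edge groups and vertex groups isomorphic to $\Z$ or $\Z^2$, i.e., a free product of free abelian groups as required. An alternative route is a doubling argument along the cusp tori, reducing to the closed case and pulling back the conclusion. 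Once this step is in place, the construction of the tower and the systole estimate go through as in the closed case with only cosmetic modifications.
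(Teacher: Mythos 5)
Your proposal has a genuine gap at its central step. The ``non-compact BSW inequality'' you invoke --- that any $k$-generated subgroup with $k \le \dim_{\F_p} H_1(\pi_1(M_i);\F_p) - c_0$ is a free product of free abelian groups --- is false as stated: a homology condition alone cannot rule out low-genus surface subgroups. A cusped hyperbolic $3$-manifold with large mod $p$ first Betti number may still contain a closed $\pi_1$-injective surface of genus $2$, whose fundamental group is $4$-generated but is neither free nor a free product of free abelian groups. The correct statement, due to Anderson--Canary--Culler--Shalen \cite[Corollary~7.4]{ACCS96}, requires \emph{two} hypotheses: $\dim H_1(M,\F_p) \ge k+2$ \emph{and} $\sysg(M) \ge k$, i.e.\ the absence of surface subgroups of genus $< k$ (with $g=1$ excluded from the definition of $\sysg$ in the cusped setting). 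Your compact-core/loop-theorem sketch cannot manufacture the second hypothesis out of the rank constraint; excluding higher-genus pieces is exactly what the systolic genus condition is for. Supplying the lower bound $\log\sysg(M_i) \ge c_2\,\sys_1(M_i)$ for cusped manifolds is the main technical content of the paper's argument: one must homotope a $\pi_1$-injective closed surface to a least-area minimal surface (Collin--Hauswirth--Mazet--Rosenberg, Huang--Wang), use LERF to pass to a cover where it embeds, handle accidental parabolics via a compression argument producing a properly embedded incompressible and boundary-incompressible surface in a compact core, and then apply Adams--Reid. None of this appears in your proposal.

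A secondary problem is the passage from $\Nfr'(\pi_1(M_i)) \ge c'\,n_i^{\alpha}$ to an exponential bound in $\sys_1(M_i)$. This requires an \emph{upper} bound $\sys_1(M_i) \lesssim \log n_i$, whereas the systole estimate of Section~\ref{sec:prelim} (Lemma~\ref{lem}) is a \emph{lower} bound and is moreover proved only for compact $M$. In the closed case the needed upper bound comes from the embedded-ball volume estimate; in the cusped case that argument fails (injectivity radius is not bounded below by half the systole) and one must instead invoke Lakeland--Leininger \cite[Theorem~1.3]{LL14}. You would also need to note that $\sys_1(M_i)\to\infty$ follows from co-finality of the tower, since Lemma~\ref{lem} is unavailable here.
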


To conclude the introduction we would like to point out one important detail. While in Theorems~\ref{thm2} and \ref{thm3} we have an absolute constant $\varepsilon > 0$, the constant in Theorem~\ref{thm1} depends on the base manifold. In \cite{Bel} it was shown that in arithmetic case $C(M)$ is also bounded below by a universal positive constant. Existence of a bound of this type in general remains an open problem. 

\begin{question}\label{quest}
Do there exist an absolute constant  $C_0 > 0$ such that for any $M$ in Theorem~\ref{thm1} we have
$C(M) \ge C_0.$
\end{question}

\section{Preliminaries}\label{sec:prelim}

Let $\Gamma < \PSL_2(\C)$ be a \emph{lattice}, i.e. a finite covolume discrete subgroup. By Mostow--Prasad rigidity, $\Gamma$ admits a discrete faithful representation into $\SL_2(\C)$ with the entries in some (minimal) number field $E$. Since $\Gamma$ is finitely generated, there is a finite set of primes $S$ in $E$ such that $\Gamma < \SL_2(\cO_{E,S})$, where $\cO_{E,S}$ denotes the ring of $S$-integers in $E$. 

Following  Calegari--Emerton \cite{CE}, we can consider an exhaustive filtration of normal subgroups $\Gamma_i$ of $\Gamma$ which gives rise to a co-final tower of hyperbolic $3$-manifolds covering $\Hy^3/\Gamma$. 
The subgroups $\Gamma_i$ are defined as follows. From the description of $\Gamma$ given above it follows that it is residually finite and for all but finitely many primes $\mathfrak{p} \in \cO_E$ there is an injective map $\phi_{\mathfrak{p}} : \Gamma \to \SL_2(\hat{\cO}_{E, \mathfrak{p}})$ (where $\hat{\cO}_{E, \mathfrak{p}}$ denotes the $\mathfrak{p}$-adic completion of the ring of integers of $E$). Let $p$ be a rational prime such that for any prime $\mathfrak{p}$ in $\cO_E$ which divides $p$, the correspondent map $\phi_{\mathfrak{p}}$ is injective (this holds for almost all primes $p$). We can write $p\cO_E= \mathfrak{p}_1^{e_1} \cdots \mathfrak{p_m}^{e_m}$.

For any $j=1, \ldots , m$, the ring $\hat{\cO}_{E,\mathfrak{p}_j}$ contains $\Z_p$ as a subring and is a $\Z_p$--module of dimension $d_j=e_jf_j$, where $f_j$ is the degree of the extension of residual fields $[\cO_E / \mathfrak{p}_j : \Z / p\Z]$. 
If we fix $j$ and a basis $b_1^j, \ldots, b_{d_j}^j \in \hat{\cO}_{E,\mathfrak{p}_j}$ as $\Z_p$--module, we have a natural ring homomorphism $\psi_j: \hat{\cO}_{E,\mathfrak{p}_j} \rightarrow M_{d_j \times d_j}(\Z_p)$ given by $\psi_j(x)=(x_{rs})$ if $xb_s^j= \sum_{r=1}^{d_j} x_{rs} b_r^j$.

Let $\psi: \prod_{j=1}^m \SL_2(\hat{\cO}_{E,\mathfrak{p}_j}) \rightarrow \GL_N(\Z_p)$ be given diagonally by the blocks $\psi_1, \ldots, \psi_m$, where $N=2 \sum_j d_j$.                                                     
Let $\phi = \psi \circ \prod_{j=1}^m \phi_{\mathfrak{p}_j} : \SL_2(\cO_{E,S}) \rightarrow \GL_N(\Z_p)$. The Zariski closure of the image of $\phi$ is a group $G < \GL_N(\Z_p)$ of dimension $d \ge 6$ (cf. \cite[Example~5.7]{CE}). It is a $p$-adic analytic group which admits a
normal exhaustive filtration
$$ G_i = G \cap \mathrm{ker}\big(\GL_N(\Z_p) \to \GL_N(\Z_p /p^i\Z_p)\big).$$
This filtration gives rise to a filtration of $\Gamma$ via the normal subgroups $\Gamma_i = \phi^{-1}(G_i)$. The filtration $(\Gamma_i)$ is exhaustive because $\phi$ is injective.

Associated to each of the subgroups $\Gamma_i$ of $\Gamma$ is a finite-sheeted cover $M_i$ of $M = \Hy^3/\Gamma$, and by the construction the sequence $(M_i)$ is a co-final tower of covers of $M$. By Minkowski's lemma, almost all groups $G_i$ are torsion-free, hence associated $M_i$ are smooth hyperbolic $3$-manifolds. Therefore, when it is needed we can assume that $M$ is a manifold itself. 

We will require a lower bound for the systole of $M_i$. Such a bound is essentially provided by Proposition~10 of \cite{GL14}, which can be seen as a generalization of a result of Margulis \cite{M} (see also \cite{LLR}). The main difference is that we do not restrict to arithmetic manifolds. The main technical difference is that while in [op. cit.] the authors consider matrices with real entries we do it for $p$-adic numbers, which requires replacing norm of a matrix by the height of a matrix. This technical part is more intricate, however, as it is shown below, it does not affect the main argument. 

\begin{lemma} \label{lem} Suppose $M$ is a compact manifold. Then there is a constant $c_1 = c_1(M) > 0$ such that $\sys_1(M_i) \ge c_1 \log n_i$, where $n_i = [\Gamma:\Gamma_i]$.
\end{lemma}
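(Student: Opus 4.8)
The plan is to bound from below the translation length in $\Hy^3$ of any element $\gamma \in \Gamma_i \setminus \{1\}$ by tracking how deep $\phi(\gamma)$ sits in the $p$-adic congruence filtration of $\GL_N(\Z_p)$. First I would fix a finite generating set of $\Gamma$ and an embedding $\Gamma \hookrightarrow \SL_2(\cO_{E,S})$; for $\gamma \in \Gamma$ with word length $\ell$ in these generators, the entries of $\gamma$ (as an $\SL_2$-matrix over $E$) have archimedean absolute values and $\mathfrak q$-adic absolute values (for $\mathfrak q \in S$) that grow at most exponentially in $\ell$, so the (multiplicative, $S$-arithmetic) height $H(\gamma)$ satisfies $\log H(\gamma) \le A\,\ell$ for a constant $A = A(M)$. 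On the geometric side, the translation length $\ell_{\mathrm{tr}}(\gamma)$ and the word length $\ell$ of $\gamma$ are comparable up to constants by the Milnor--\v Svarc lemma applied to the cocompact action of $\Gamma$ on $\Hy^3$; so it suffices to show that $\gamma \in \Gamma_i \setminus \{1\}$ forces $\log H(\gamma)$, and hence $\ell$, to be $\gtrsim i$, together with the fact that $\log n_i \le A' i$ for some $A' = A'(M)$ (this last estimate is immediate since $[\Gamma:\Gamma_i] = [\phi(\Gamma) : \phi(\Gamma) \cap G_i] \le [G : G_i] \le |\GL_N(\Z_p/p^i\Z_p)| \le p^{N^2 i}$).

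Next I would prove the key arithmetic inequality: if $\gamma \in \Gamma_i \setminus \{1\}$ then $\log H(\gamma) \ge c\, i$ for some $c = c(M) > 0$. The point is that $\gamma \in \Gamma_i$ means $\phi(\gamma) \equiv \mathrm{Id} \pmod{p^i}$ in $\GL_N(\Z_p)$, i.e.\ every entry of $\phi(\gamma) - \mathrm{Id}$ lies in $p^i\Z_p$; since $\gamma \ne 1$ and $\phi$ is injective, at least one such entry $t$ is nonzero, so $|t|_p \le p^{-i}$. Now unwinding the definition of $\phi = \psi \circ \prod_j \phi_{\mathfrak p_j}$: the matrix $\psi_j$ of multiplication by an element $x \in \hat{\cO}_{E,\mathfrak p_j}$ in a fixed $\Z_p$-basis has entries that are $\Z_p$-linear combinations, with \emph{fixed} coefficients depending only on the chosen bases and the structure constants of $E$, of the coordinates of $x$; tracing through, each entry of $\phi(\gamma) - \mathrm{Id}$ is a fixed $\Z$-linear (more precisely $\cO_{E,S}$-controlled) expression in the entries of $\gamma$. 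Hence the nonzero entry $t$ above is, up to bounded denominators coming from $S$, an algebraic integer combination of the matrix entries of $\gamma$; applying the product formula (Liouville-type bound) over the number field $E$ and the completions at $p$ and at $S$, the constraint $|t|_p \le p^{-i}$ together with bounded $S$-contributions forces the archimedean size of the entries of $\gamma$ to be at least $c' p^{i/[E:\Q]}$ or so — in any case $\log H(\gamma) \ge c\, i$. Combining with $\log n_i \le A' i$ and $\ell_{\mathrm{tr}}(\gamma) \ge \ell/A'' - B \ge \log H(\gamma)/(AA'') - B$, we get $\sys_1(M_i) = \min_{\gamma \in \Gamma_i \setminus\{1\}} \ell_{\mathrm{tr}}(\gamma) \ge c_1 \log n_i$ after adjusting constants (and absorbing the additive term, which is harmless since $n_i \to \infty$).

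The main obstacle, and the technical heart flagged in the text, is the passage from ``$\phi(\gamma)$ is $\equiv \mathrm{Id} \bmod p^i$'' to a genuine lower bound on the height of $\gamma$ as an $S$-integral $\SL_2$-matrix over $E$ — i.e.\ making the Liouville/product-formula step uniform. One must control: (a) the denominators introduced by passing between the $\mathfrak p_j$-adic completions and $\Z_p$ (bounded, depending only on $p$, $E$, and the chosen $\Z_p$-bases $b_r^j$); (b) the $\mathfrak q$-adic contributions at the bad primes $q \in S$, which are bounded exponentially in the word length but — crucially — this exponential rate is a \emph{fixed} constant independent of $i$, so it only changes the constant $c_1$, not the linear dependence on $\log n_i$; and (c) the fact that although $\phi(\gamma)-\mathrm{Id}$ being nonzero $p$-adically guarantees some matrix coordinate of $\gamma - \mathrm{Id}$ (over $E$) is nonzero, one needs that this coordinate is small $p$-adically, which follows because $\psi_j$ is a ring \emph{isomorphism} onto its image and hence detects $p$-adic valuation up to the fixed distortion of the basis change. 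This is exactly where Gendulphe--Lackenby \cite{GL14} and Margulis \cite{M} used the operator norm of an integer matrix; here the operator norm is replaced by the height, and the product formula over $E$ plays the role of the elementary inequality $\|\text{nonzero integer matrix}\| \ge 1$. Once these bookkeeping constants are pinned down, the argument of Proposition~10 of \cite{GL14} goes through verbatim.
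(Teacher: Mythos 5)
Your proposal follows essentially the same route as the paper's proof: Milnor--\v{S}varc to pass between translation length and word length, an exponential upper bound on the height $H(\gamma)$ in terms of word length, and a lower bound $\log H(\gamma) \gtrsim i$ for $\gamma \in \Gamma_i \setminus \{1\}$ obtained from the congruence condition at the primes above $p$ together with the product formula, combined with $\log n_i \lesssim i$. The only quibbles are cosmetic (e.g.\ \cite{GL14} is Guth--Lubotzky, and the precise exponent in the height lower bound is $p^{ni}$ with $n=[E:\Q]$ rather than $p^{i/[E:\Q]}$, which you correctly note is immaterial); the argument is correct.
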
 

\begin{proof}
Since $M$ is compact, we can apply the Milnor--Schwarz lemma. Therefore, if we fix a point $o \in \Hy^3$, then $\Gamma$ has a finite symmetric set of generators $X$ such that the map $(\Gamma,X) \rightarrow \Hy^3$ given by $\gamma \mapsto \gamma(o)$ is a $(C_1,C_2)$ quasi-isometry.
This means that for any pair $\gamma_1, \gamma_2 \in \Gamma$ we have $$ C_1 d_X(\gamma_1,\gamma_2) - C_2 \leq d(\gamma_1(o),\gamma_2(o)) \leq \frac{1}{C_1} d_X(\gamma_1,\gamma_2) + C_2,$$
where $d(\cdot,\cdot)$ denotes the distance function in $\Hy^3$, $d_X(\gamma_1,\gamma_2)=|\gamma_1^{-1}\gamma_2|_X$ and $|\gamma|_X$ is the minimal length of a word in $X$ which represents $\gamma$. 
For any $i \geq 1$, we define $\sys(\Gamma_i,X) = \min \{ d_X(1,\gamma) \, | \, \gamma \in \Gamma_i \backslash \{1\} \}$.

\medskip

{\noindent \bf Claim 1:} Let $\delta_M > 0$ be the diameter of $M$. For any $ i \geq 1$, we have 
$$\sys_1(M_i) \geq C_1 \sys(\Gamma_i,X) - C_2 - 2\delta_M.$$

To prove the claim, consider the Dirichlet fundamental domain $D(o)$ of $\Gamma$ in $\Hy^3$ centered in $o$. It is easy to see that  any point $x \in D(o)$ satisfies $d(x,o) \leq \delta_M$. Now let $\alpha_i \subset M_i$ be a closed geodesic realizing the systole of $M_i$. As $M_i \rightarrow M$ is a local isometry, the image of $\alpha_i$ in $M$ has the same length (counted with multiplicity). Denote the image by $\alpha_i$ again.
We can suppose that $x_i \in D(o)$ is a lift of $\alpha_i(0)$. Thus, there exists a unique nontrivial $\gamma_i \in \Gamma_i$ such that $\sys_1(M_i)=d(x_i, \gamma_i(x_i))$.
Note that $d(x_i,o)=d(\gamma_i(x_i),\gamma_i(o)) \leq \delta_M$, therefore, by the triangle inequality we have 
$$ \sys_1(M_i) \geq d(o, \gamma_i(o)) - 2\delta_M \geq C_1 d_X(1,\gamma_i) - C_2 -2\delta_M \geq C_1 \sys(\Gamma_i,X) - C_2 -2\delta_M.$$

Now our problem is reduced to proving that $\sys(\Gamma_i,X)$ grows logarithmically as a function of $[\Gamma:\Gamma_i]$. In order to do so  we use arithmetic of the field $E$ in an essential way.

Let $S(E)$ be the set of all places of $E$, $S_\infty$ be the set of archimedean places, and $S_p$ be the set of places corresponding to the prime ideals $\mathfrak{p}_1, \ldots, \mathfrak{p}_m$, which appear in the definition of $M_i$.
For any $x \in E$, we define the \emph{height} of $x$ by $H(x)= \prod_{v \in S(E)} \max\{1,|x|_v\}$. Recall that for any $x,y \in E$ and an archimedean place $v$, we have $|x+y|_v \leq 4 \max\{|x|_v,|y|_v\}$, and for any non-archimedean place $u$, we have $|x+y|_u \leq \max\{|x|_v,|y|_u\}$. Therefore, the height function satisfies $H(x+y) \leq 4^{\# S_\infty}H(x)H(y)$.

We can generalize the definition of height for matrices with entries in $E$. Thus, for any $M=(m_{ij}) \in \SL_2(E)$, we define $H(M)= \prod_{v \in S(E)} \max\{1,|m_{ij}|_v\}$. We note that $H(M) \geq \max\{H(m_{ij})\}$.

\medskip
  
{\noindent\bf Claim 2:} For any $M,N \in \SL_2(E)$, we have $H(MN) \leq 4^{\# S_\infty} H(M)H(N)$.

\medskip

Indeed, any entry $x$ of $MN$ can be written as $x=au+bt$ with $a,b$ entries of $M$ and $u,t$ entries of $N$. Therefore, for any $v \in S_{\infty}$, $$\max \{1,|x|_v\} \leq 4 \max\{1,|a|_v,|b|_v\} \max \{1,|u|_v,|t|_v\} \leq  4 \max\{1,|m_{ij}|_v\} \max \{1,|n_{ij}|_v\}.$$
For non-archimedean places we have the same inequality without the factor $4$. Now if $MN=(x_{ij})$, then these inequalities show that
$$ H(MN)=\prod_{v \in S(E)} \max\{1,|x_{ij}|_v\} \leq  4^{\# S_\infty} H(M)H(N).$$

Next we want to estimate from below the height of $\gamma$ for any nontrivial $\gamma \in \Gamma_i$. 

\medskip
  
{\noindent\bf Claim 3:} There exists a constant $C_3>0$ such that for any $\gamma \in \Gamma_i \backslash \{1\}$ we have $H(\gamma) \geq C_3 p^{ni}$, where $n=[E:\Q]$.

\medskip

Indeed, let $\gamma= \gamma_{r_1} \cdots \gamma_{r_w(\gamma)} \in \Gamma_i$ be a nontrivial element with $\gamma_{r_j} \in X$ and $w(\gamma)=d_X(1,\gamma)$. We now recall the definition of the group $\Gamma_i$. If we write $\gamma=\left( \begin{matrix} a & b \\ c & d \end{matrix} \right)$, then for any $l=1, \ldots, m$ we have
$$\left( \begin{matrix} \psi_l(a) & \psi_l(b) \\ \psi_l(c) & \psi_l(d) \end{matrix} \right) \equiv \left( \begin{matrix} I_{d_l} & 0 \\ 0 & I_{d_l} \end{matrix}  \right) \mod (p^i \Z_p).$$
By the definition of $\psi_l$ we have that $(a-1)b_j^l, bb_j^l, cb_j^l, (d-1)b_j^l \in p^i \hat{\cO}_{E,\mathfrak{p}_l}$ for any $1\leq j \leq d_l$. 
Taking $C^*= \min_{l,j} \{ |b_j^l|_{\mathfrak{p}_l} \} > 0$, we obtain 
$$C^*\max\{|a-1|_{\mathfrak{p}_l},|b|_{\mathfrak{p}_l},|c|_{\mathfrak{p}_l},|d-1|_{\mathfrak{p}_l} \} \leq \Nr({\mathfrak{p}_l})^{-ie_l},$$ 
for any $l=1, \ldots, m$. 
This is because $|p|_{\mathfrak{p}_l}=\Nr({\mathfrak{p}_l})^{-e_l}$ by definition, where for an ideal $I \subset \cO_E$ the \emph{norm} of $I$ is equal to $\Nr(I)=\# (\cO_E / I)$.

Recall that the Product Formula says that for any nonzero $x \in E$ we have $\prod_{v} |x|_v = 1$. Since $\gamma$ is nontrivial, at least one of the numbers $\{ a-1, b, c, d-1\}$ is not zero.
Therefore, if we apply the Product Formula for any nonzero element in this set, we obtain 
$$ \max \{H(a-1),H(b),H(c),H(d-1)\} \geq \prod_{l=1}^m C^* \Nr({\mathfrak{p}_l})^{ie_i}=(C^*)^m p^{ni}.$$  

Moreover, by the estimate of the height of a sum we have 
$$\max\{H(a-1),H(b),H(c),H(d-1)\} \leq 4^{\# S_\infty} \max\{H(a),H(b),H(c),H(d)\},$$ 
therefore, 
$$ H(\gamma) \geq \max\{H(a),H(b),H(c),H(d)\} \geq \frac{(C^*)^m p^{ni}}{4^{\# S_\infty}}=C_3p^{ni}. $$
This proves Claim~3. 

\medskip

We can now finish the proof of the lemma. If we take $C_4=4^{\# S_\infty} \max\{H(M) \, | \, M \in X \}$, we have
$$ C_3p^{ni} \leq H(\gamma) \leq (4^{\# S_\infty})^{d_X(1,\gamma)-1}(\max\{H(M) \, | \, M \in X \})^{d_X(1,\gamma)} \leq C_4^{d_X(1,\gamma)}.$$
This estimate holds for any nontrivial $\gamma \in \Gamma_i$, hence $ C_3p^{ni} \leq C_4^{\sys(\Gamma_i,X)}$ for any $i$. On the other hand, there exists a constant $C_5>0$ such that $[\Gamma:\Gamma_i] \leq C_5 p^{i\dim(G)}.$
These inequalities together imply that 
$$ \sys(\Gamma_i,X) \geq \frac{n}{\dim(G)\log(C_4)}\log([\Gamma:\Gamma_i]) + \frac{\log(C_3C_5^{\frac{-n}{\dim G}})}{\log(C_4)}.$$
Since $[\Gamma:\Gamma_i] \to \infty$ and $\sys_1(M_i)$ is bounded below by a positive constant, we conclude that 
there exists a constant $c_1=c_1(o,\delta_M,p,\psi_1, \ldots, \psi_m)=c_1(M) > 0$ such that $\sys_1(M_i) \geq c_1 \log([\Gamma:\Gamma_i])$ for any $i \geq 1$.
\end{proof}

Note that the constant $c_1$ depends on $M$ (cf. Question~\ref{quest}). If $M$ is arithmetic, then by \cite{KSV} we can take $c_1 = \frac23 - \epsilon$ for a small $\epsilon > 0$ assuming $n_i$ is sufficiently large. In general case the argument of \cite{KSV} does not apply, while the proof of Lemma~\ref{lem} does not provide a sufficient level of control over the constants. 

\section{Proofs of Theorems \ref{thm1} and \ref{thm2}}\label{sec3}

Following \cite{Bel}, we define the \emph{systolic genus} of a manifold $M$ by 
$$\sysg(M) = \min\{g \mid \text{the fundamental group } \pi_1(M) \text{ contains } \pi_1(S_g)\},$$
where $S_g$ denotes a closed Riemann surface of genus $g > 0$.

Let $M$ be a closed hyperbolic $3$-manifold with sufficiently large systole $\sys_1(M)$.  By \cite[Theorem~2.1]{Bel}, we have
\begin{align}\label{eq3:1}
\log \sysg(M) \ge c_2\cdot\sys_1(M),
\end{align}
where $c_2 > 0 $ is an absolute constant (for any $\delta>0$, assuming $\sys_1(M)$ is sufficiently large, we can take $c_2 = \frac12-\delta$). 

The second ingredient of the proof is a theorem of Calegary--Emerton \cite{CE}, which implies that for the sequences of covers defined in Section~\ref{sec:prelim} we have  
\begin{align}\label{eq3:2}
\dim \mathrm{H}_1(M_i, \F_p) \ge \lambda \cdot p^{(d-1)i} + O(p^{(d-2)i})
\end{align}
for some rational constant $\lambda \neq 0$. Recall that we have dimension $d = \dim(G) \ge 6$ and the degree of the covers $M_i \to M$ grows like $p^{di}$. Hence we can rewrite \eqref{eq3:2} in the form 
\begin{align}\label{eq3:3}
\dim \mathrm{H}_1(M_i, \F_p) \ge c_3\vol(M_i)^{5/6},
\end{align}
where $c_3 > 0$ is a constant depending on $M$ and we assume that $\vol(M_i)$ is sufficiently large.  

We note that in contrast with the previous related work, the theorem of \cite{CE} applies to non-arithmetic manifolds as well as to the arithmetic ones. 

Now recall a result of Baumslag--Shalen \cite[Appendix]{BSh}. They show that if $\sysg(M) \ge k$ and $\dim \mathrm{H}_1(M, \Q) \ge k + 1$, then $\pi_1(M)$ is $k$-free. In a subsequent paper \cite{SW}, Shalen and Wagreich proved that the same conclusion holds if $\sysg(M) \ge k$ and $\dim \mathrm{H}_1(M, \F_p) \ge k + 2$ [loc. cit., Proposition~1.8].

We now bring all the ingredients together. Given a closed hyperbolic $3$-orbifold $M$, for the sequence $(M_i)$ of its manifold covers defined in Section~\ref{sec:prelim} we have:
\begin{align*}
\sysg(M_i) & \ge e^{c_2\cdot \sys_1(M_i)} \text{ (by \eqref{eq3:1}) }\\
                & \ge \vol(M_i)^c \text{ (by Lemma~\ref{lem}); }
\end{align*}
and
\begin{align*}
\dim \mathrm{H}_1(M_i, \F_p) & \ge c_3\cdot\vol(M_i)^{5/6} \text{ (by \eqref{eq3:3})}.
\end{align*}
Hence by the theorem from \cite{SW} cited above we obtain
\begin{align*}
\Nfr(\pi_1(M_i)) \ge \vol(M_i)^C,
\end{align*}
where $C = C(M) > 0$ and we assume that $\vol(M_i)$ is sufficiently large. This proves Theorem~\ref{thm1}.

For the second theorem recall that the systole of a hyperbolic $3$-manifold is bounded above by the logarithm of its volume. Indeed, a manifold $M$ with a systole $\sys_1(M)$ contains a ball of radius $r = \sys_1(M)/2$. The volume of a ball in $\Hy^3$ is given by $\vol(B(r)) = \pi(\sinh(2r)-2r)$, hence we get
\begin{align*}
& \vol(M) \ge \pi (\sinh(\sys_1(M)) - \sys_1(M)) \sim \frac{\pi}2 e^{\sys_1(M)};\\
& \vol(M) \ge e^{c\cdot \sys_1(M)}, \text{ as } \sys_1(M)\to\infty.
\end{align*}
By Lemma~\ref{lem}, the systole of the covers $M_i\to M$ grows as $i \to\infty$. Therefore, we can bound both $\sysg(M_i)$ and  $\dim \mathrm{H}_1(M_i, \F_p)$ below by an exponential function of $\sys_1(M_i)$ with an absolute constant in exponent. Theorem~\ref{thm2} now follows immediately from the theorem of \cite{SW}. 
\qed

\begin{rmk}
It follows from the proof that for any $\delta >0$, assuming $\sys_1(M_i)$ is large enough, we can take $\varepsilon$ in Theorem~\ref{thm2} equal to $e^{\frac12-\delta} - 1$. The same bound applies for the constant in Theorem~\ref{thm3}, which we prove in the next section.
\end{rmk}

\section{Generalization to finite volume hyperbolic \texorpdfstring{$3$}{3}-manifolds}\label{sec4}

Let $\Gamma < \PSL_2(\C)$ be a finite covolume Kleinian group. The quotient $M = \Hy^3/\Gamma$ is a finite volume orientable hyperbolic $3$-orbifold, which can be either closed or non-compact with a finite number of cusps. The group $\Gamma$ is a relatively hyperbolic group with respect to the cusp subgroups. In this section we discuss a generalization of Gromov's conjecture and our results to this class of groups. 

We call $\Gamma$ a \emph{$k$-semifree  group} if any subgroup of $\Gamma$ generated by $k$ elements is a free product of free abelian groups. The maximal $k$ for which $\Gamma$ is $k$--semifree is denoted by $\Nfr'(\Gamma)$. With this definition, we can generalize Gromov's conjecture to relatively hyperbolic groups. Although the injectivity radius of manifolds with cusps vanish, their systole is still bounded away from zero. Therefore, a natural generalization of Gromov's conjecture would be that $\Nfr'(\Gamma)$ is bounded
below by an exponential function of the systole of the associated quotient space $M$. Theorem~\ref{thm3}, which we prove in this section, can be considered as an evidence for this conjecture. 

We need to modify the definition of the \emph{systolic genus} of a manifold $M$ in the following way:  
$$\sysg(M) = \min\{g > 1 \mid \text{the fundamental group } \pi_1(M) \text{ contains } \pi_1(S_g)\},$$
where $S_g$ denotes a closed Riemann surface of genus $g$. We excluded the genus $g = 1$ in order to adapt the definition to the non-compact finite volume $3$-manifolds which otherwise would all have $\sysg = 1$.

Let $M$ be a finite volume hyperbolic $3$-manifold with sufficiently large systole $\sys_1(M)$.  By \cite[Theorem~2.1]{Bel}, if $M$ is closed, we have
\begin{align}\label{eq4:1}
\log \sysg(M) \ge c_2\cdot\sys_1(M),
\end{align}
where $c_2 > 0 $ is an absolute constant. We now discuss a generalization of this result to non-compact finite volume $3$-manifolds. The first step in the proof of the theorem in \cite{Bel} is an application of the theorem of Schoen--Yau and Sacks--Uhlenbeck, which allows to homotop a $\pi_1$-injective map of a surface of genus $g > 1$ into $M$ to a minimal immersion. 
This result was recently generalized to the finite volume hyperbolic $3$-manifolds in the work of Collin--Hauswirth--Mazet--Rosenberg \cite{CHMR17} and Huang--Wang \cite{HW17} (see in particular \cite[Theorem~1.1]{HW17}). So let $S_g$ be a closed immersed least area minimal surface in $M$. In order to establish \eqref{eq4:1} for $M$ we can suppose that $S_g$ is embedded. Indeed, since $\pi_1(M)$ is \emph{LERF} \cite[Corollary 9.4]{Agol12} there exists a finite covering $\tilde{M}$ of $M$ such that $S_g$ is embedded and \pinj in $\tilde{M}$. Moreover, $g \geq \sysg(\tilde{M})$ and $\sys_1(\tilde{M}) \geq \sys_1(M)$. If $S_g$ has no accidental parabolic curves, then the systole of $S_g$ with respect to the induced metric satisfies $\sys_1(S_g) \ge \sys_1(M)$ and the rest of the proof in \cite{Bel} applies without any changes.

In the presence of accidental parabolics, we can apply the following lemma.

\begin{lemma}[Compression Lemma] \label{compressionlemma}
Let $M$ be a non-compact hyperbolic $3$-manifold of finite volume. Suppose that there exists a $\pi_1$-injective embedded closed surface $S_g \subset M$, for some genus $g \geq 2$, such that $S_g$ has an accidental parabolic simple curve $\alpha$. Then there exist disjoint tori $T_1, \ldots, T_n \subset M$, one for each cusp $\mathcal{C}(T_i)$ of $M$, such that the compact $3$-manifold $M'=M \setminus \displaystyle \cup_{i=1}^n \mathcal{C}(T_i)$  has a properly incompressible and boundary-incompressible surface $S_{g',p}$ with $g' \geq \frac{g}{2}$ and $1 \leq p \leq 2$.
 \end{lemma}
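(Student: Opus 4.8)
The plan is to perform surgery on the surface $S_g$ along the accidental parabolic curve $\alpha$ and iterate, using a genus-counting argument to control the loss of complexity. First I would fix a cusp cross-section: for each cusp of $M$ choose a horospherical torus $T_i$ bounding a cusp neighborhood $\mathcal{C}(T_i)$, with the $T_i$ chosen deep enough in the cusps that they are pairwise disjoint and that after an isotopy of $S_g$ the accidental parabolic $\alpha$ (and every curve on $S_g$ freely homotopic in $M$ into a cusp) is pushed so that $S_g$ meets $\mathcal{C}(T_i)$ in a disjoint union of annuli, each annulus having core freely homotopic to $\alpha$ or to another accidental parabolic. Here I use that an accidental parabolic on a $\pi_1$-injective surface is, by definition, an essential simple closed curve on $S_g$ that is peripheral (parabolic) in $\Gamma$, hence is homotopic into some cusp; standard 3-manifold position arguments (as in Waldhausen-type normalization) let me realize all such curves simultaneously as cores of vertical annuli in the cusps.

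Next I would cut: set $S^\circ = S_g \cap M'$ where $M' = M \setminus \bigcup \mathcal{C}(T_i)$. This $S^\circ$ is a properly embedded compact surface in $M'$ with $\partial S^\circ \subset \partial M'$, and it is incompressible in $M'$ because $S_g$ is $\pi_1$-injective and the $T_i$ are incompressible. It may however be boundary-compressible: a boundary compression corresponds exactly to an accidental parabolic annulus on $S_g$ that can be isotoped into $\partial M'$. So the key step is: repeatedly perform boundary compressions (equivalently, compress $S_g$ along the annuli it cuts off in the cusps — i.e. replace a subannulus $A \subset S_g$ with a parallel annulus in $\partial\mathcal{C}(T_i)$ and push off) until no boundary compressions remain. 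Each such compression either splits off a component or lowers the genus, and I must track how much genus survives.

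The main obstacle — and the step needing the most care — is the genus bookkeeping that yields $g' \ge g/2$ and $p \le 2$. The point is that a single boundary compression along an essential arc cuts $S_{g,b}$ into either a connected $S_{g-1, b+1}$ (non-separating arc) or into two pieces; and an accidental parabolic $\alpha$ that is \emph{non-separating} on $S_g$, when compressed, drops the genus by $1$ but also drops the number of boundary components that will eventually appear. The worst case is when $\alpha$ separates $S_g$ into two pieces of genera $g_1, g_2$ with $g_1 + g_2 = g$; then one of the pieces has genus $\ge g/2$, and I discard the smaller half and continue on the larger half, which still carries a $\pi_1$-injective surface (each half, capped along $\alpha$ by an annulus into the cusp and then the cusp pushed off, is $\pi_1$-injective since $\alpha$ is $\pi_1$-injective in $M$). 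Iterating this "keep the bigger half" strategy, each step at worst halves the genus, but I only need to reach a surface with \emph{no} accidental parabolics and with at most two boundary components: I would argue that once a component is boundary-incompressible it has at most two boundary curves (two annular ends can be absorbed; a surface with $p \ge 3$ peripheral curves that is incompressible and $\partial$-incompressible would force, via an Euler-characteristic / curve-complex count, a further reducing compression), so the process terminates with $g' \ge g/2$, $1 \le p \le 2$. I would present the iteration as a single induction on $g$ with the invariant "there is a $\pi_1$-injective embedded $S_h \subset M$ with $h \ge g/2^{\text{(steps taken)}}$ and fewer accidental parabolics," terminating at the desired $S_{g',p} \subset M'$, incompressible and boundary-incompressible. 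The delicate verification is that boundary-incompressibility is genuinely achieved and that the final bound is $g/2$ rather than something worse — this forces the discard-the-smaller-half choice at every stage and the observation that a non-separating accidental parabolic costs only one unit of genus while still permitting us to proceed on the resulting connected surface.
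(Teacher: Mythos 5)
Your overall strategy---push the accidental parabolic into a cusp, cut $S_g$ along a vertical annulus, and control the genus of what survives---is the right one, and your genus count for a \emph{single} compression (non-separating: connected, genus $g-1\ge g/2$, two boundary curves; separating: keep the larger half, genus $\ge g/2$, one boundary curve) is exactly the paper's. But your proposal then diverges into an iteration that breaks the stated bound. The paper performs \emph{one} compression, along the single simple curve $\alpha$ given in the hypothesis: it uses the Jaco--Shalen Annulus Theorem (via Ozawa--Tsutsumi) to produce an embedded annulus $\mathcal{A}$ from $\alpha$ to a curve on a horospherical torus meeting $S_g$ only in $\alpha$, slides a collar of $\alpha$ along $\mathcal{A}$ into the cusp, truncates at a torus $T_1$, and is done. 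Your scheme of repeatedly boundary-compressing and ``keeping the bigger half'' yields, as you yourself note, only $g'\ge g/2^{k}$ after $k$ steps; you flag the verification that the answer is $g/2$ ``rather than something worse'' as delicate, but in fact your iteration cannot deliver $g/2$ --- the resolution is that no iteration is needed. Relatedly, your simultaneous normalization of \emph{all} accidental parabolics as cores of vertical annuli is not justified (distinct accidental parabolics on $S_g$ need not be disjoint, so they cannot all be made cores of disjoint embedded annuli at once) and is not required by the statement.

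The second genuine gap is your claimed exit condition: ``once a component is boundary-incompressible it has at most two boundary curves'' is false (essential planar surfaces and Seifert surfaces of links give incompressible, boundary-incompressible surfaces with arbitrarily many boundary components). In the lemma, $1\le p\le 2$ is not a consequence of boundary-incompressibility; it is the elementary count of boundary circles created by cutting a closed surface along one simple closed curve. Finally, boundary-incompressibility of the resulting surface does not need to be \emph{achieved} by further compressions: since $S_g$ is $\pi_1$-injective the truncated surface is incompressible in $M'$, its boundary lies on torus components of $\partial M'$, and it is not an annulus (its genus is at least $g/2\ge 1$), so the standard fact that an incompressible surface with boundary on tori in an irreducible $3$-manifold is boundary-incompressible unless it is a boundary-parallel annulus (or disk) finishes the argument --- this is what the paper's closing sentence is invoking.
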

\begin{proof}
Suppose that $\alpha$ is associated to a parabolic isometry corresponding to a cusp $\mathcal{C} = T_0 \times [0, \infty)$ of $M$, where $T_0$ is a maximal torus. 
Since $S_g$ is compact we can consider a torus $T = T_0 \times \{t_0\} \subset \mathcal{C}$ for some $t_0 >0$ sufficiently large such that $S_g \subset M \setminus T_0 \times [t_0, \infty)$. We denote by $\beta \subset T$ the corresponding simple curve homotopic to $\alpha$.

We first show that there exists an embedding $f:S_g \rightarrow M$ homotopic to the embedding $\iota:S_g \rightarrow M$ such that $f$ is transversal to some torus $T_1 \subset \mathcal{C}$ and $f(S_g) \cap T_1 \times  [0, \infty) \subset \mathcal{C}$ is an annulus with boundary curves $f(\alpha_0), \, f(\alpha_1)$, where $\alpha_0, \alpha_1$ are the boundary curves of a collar neighborhood of $\alpha$ in $S_g$.   
  
As an application of the Jaco--Shalen Annulus Theorem \cite[Theorem VIII.13]{Jaco80}, there exists an embedding $H_0: \sss \times [0,1] \rightarrow M $ with $H(\theta,0)=\alpha(\theta)$ and $H(\theta,1)=\beta(\theta)$ (see \cite[Lemma 2.1]{OT03}).
We can suppose that $H_0$ is transversal to $S_g$ and $T$ and is such that if we denote by $\mathcal{A}$ the image $H_0(\sss \times [0,1])$, then $\mathcal{A} \cap S_g =\alpha$ and $\mathcal{A} \cap M \setminus T \times [0, \infty) = \beta$. 

Let $\mathcal{D}$ be a collar neighborhood of $\alpha$ in $S_g$ contained in  a tubular neighborhood $\pi: E \subset M \rightarrow \mathcal{A}$ such that $\mathcal{D} \cap \mathcal{A} = \alpha$. Since $\pi:E \rightarrow \mathcal{A}$ is trivial, we can deform $\mathcal{D}$ into $E$ preserving the boundary and moving $\alpha$ along $\mathcal{A}$. We get a new annulus $\mathcal{D}' \subset M$ with $\partial \mathcal{D}' = \alpha_0 \cup \alpha_1$ and $\mathcal{D}' \cap T = \beta$. 

Let $\psi$ be the diffeomorphism between $\mathcal{D}$ and $\mathcal{D}'$ given by the deformation. We can suppose that $\psi$ is the identity in a small neighborhood of the boundary. We now define the map $f:S_g \rightarrow M$ by $f(x)=x$ if $x \notin \mathcal{D}$ and $f(y)=\psi(y)$ if $ y \in \mathcal{D}$. It is a smooth embedding homotopic to the inclusion.

By transversality, for some $ 0<t_1<t_0$ we have a torus $T_1 = T_0 \times \{t_1\}$ and a subannulus $\hat{\mathcal{D}} \subset \mathcal{D}$ such that  $f$ is transversal to $T_1$ and 
$$f(S_g) \cap M \setminus T_1 \times [0, \infty) = f(S_g \setminus \mathrm{int}(\hat{\mathcal{D}})) \hspace{0.3cm} \mbox{and} \hspace{0.3cm}f(\partial (S_g \setminus \mathrm{int}(\hat{\mathcal{D}}))) = f(\partial \hat{\mathcal{D}}) \subset T_1.$$ 
This shows that embedding $f$ has the desired properties.  

Now, for the torus $T_1$ constructed above,  there exist disjoint tori $T_2, \ldots, T_n$ in the cusps of $M$ such that the corresponding cusps $\mathcal{C}(T_j) \cap \mathcal{C}(T_1) = \emptyset $ for all $j=2, \ldots, n$ and $f(S_g \setminus \mathrm{int}(\hat{\mathcal{D}})) \subset M'=M \setminus \displaystyle \cup_{i=1}^n \mathcal{C}(T_i)$, and we  have that  $f(S_g \setminus \mathrm{int}(\hat{\mathcal{D}})) \subset M'$ is a proper submanifold of $M'$. 

Note that $f(S_g \setminus \mathrm{int}(\hat{\mathcal{D}}))$ is connected with two boundary curves if $\alpha$ does not separate and has two components with a boundary curve if $\alpha$ separates it.                                                                                                                                                                                                                                                                                                                                                                                                                                                                                                                                                                                                                                                                                                                                                                                          In the latter case we consider the component with the maximal genus. Hence in both cases we have a surface $S_{g',p}$ with $g' \geq \frac{g}{2}$ and $1 \leq p \leq 2$ and a proper embedding $f:S_{g',p} \rightarrow M'$.

Recall that a properly embedded surface $F$ in a compact $3$-manifold $N$ with boundary is called \emph{boundary-compressible} if either $F$ is a disk and $F$ is parallel to a disk in $\partial N$, or $F$ is not a disk and there exists a disk $D \subset N$ such that $D \cap F=c $ is an arc in $\partial D$, $D \cap \partial N=c'$ is an arc in $\partial D$, with $c \cap c'= \partial c = \partial c'$ and $c \cup c' = \partial D$, and either $c$ does not separate $F$ or $c$ separates $F$ into two components and the closure of neither is a disk. Otherwise, $F$ is \emph{boundary-incompressible} (see \cite[Chapter~III]{Jaco80}).

Since $S_g \subset M$ is $\pi_1$-injective, it follows from the definition and our construction that $S_{g',p} \subset M'$ is incompressible and boundary-incompressible.
\end{proof}

We now apply to $S_{g', p}$ a result of Adams and Reid \cite[Theorem~5.2]{AR00}. Since $\sys_1(M)=\sys_1(M')$, it immediately implies inequality \eqref{eq4:1}.

The theorem of Calegary--Emerton applies to non-cocompact groups as well as to the cocompact ones.

We finally recall a result of Anderson--Canary--Culler--Shalen \cite{ACCS96}. They show that if $\sysg(M) \ge k$ and $\dim \mathrm{H}_1(M, \F_p) \ge k + 2$ for some prime $p$, then $\pi_1(M)$ is $k$-semifree [loc. cit., Corollary~7.4]. This theorem generalizes the previous results in \cite{BSh, SW} to non-compact hyperbolic $3$--manifolds. Its  proof also makes an essential use of topology of $3$-manifolds. 

Similar to Section~\ref{sec3}, we bring together all the ingredients considered above. 

Given a finite volume  hyperbolic $3$-orbifold $M$, for the sequence $(M_i)$ of its manifold covers defined in Section~\ref{sec:prelim} we have:
\begin{align*}
\sysg(M_i) & \ge e^{c_2\cdot \sys_1(M_i)} \text{ (by \eqref{eq4:1}),}
\end{align*}
and
\begin{align*}
\dim \mathrm{H}_1(M_i, \F_p) & \ge c_3\cdot\vol(M_i)^{5/6} \text{ (by Calegary--Emerton).}
\end{align*}

The fact that a manifold $M$ with systole $\sys_1(M)$ contains a ball of radius $r = \sys_1(M)/2$ is not necessarily true for non-compact finite volume hyperbolic $3$-manifolds but it is still possible to bound the volume by an exponential function of the systole. By Lakeland--Leininger \cite[Theorem~1.3]{LL14}, we have
\begin{align*}
\vol(M) \ge e^{c\cdot \sys_1(M)}, \text{ as } \sys_1(M)\to\infty
\end{align*}
(with $c = \frac{3}{4}-\delta$ for any $\delta > 0$, assuming $\sys_1(M)$ is sufficiently large). 

Although we do not have a generalization of Lemma~\ref{lem}, we do know that $\sys_1(M_i)\to\infty$ with $i$ because the sequence of covers $M_i \to M$ is co-final. Therefore, we can bound both $\sysg(M_i)$ and  $\dim \mathrm{H}_1(M_i, \F_p)$ below by an exponential function of $\sys_1(M_i)$ with an absolute constant in exponent and Theorem~\ref{thm3} now follows from the theorem of \cite{ACCS96}. \qed

\medskip

{\noindent\bf Acknowledgments.} We would like to thank Valdir Pereira Jr. for helpful discussions. We thank Alan Reid for his critical remarks on the first version of this paper.

\end{document}